\DeclareMathOperator{\dom}{dom}
\DeclareMathOperator{\Col}{Col}
\DeclareMathOperator{\GCH}{GCH}
\DeclareMathOperator{\Ult}{Ult}
\def\MPB{{\mathbb{P}}}
\def\a{\alpha}
\newtheorem{theorem}{Theorem}[section]
\newtheorem{lemma}[theorem]{Lemma}
\newtheorem{definition}[theorem]{Definition}
\newtheorem{claim}[theorem]{Claim}
\numberwithin{equation}{section}
\def\rmark{\mbox{$\rm\bf\rule{0.06em}{1.45ex}\kern-0.05em R$}}
\def\pmark{\mbox{$\rm\bf\rule{0.06em}{1.45ex}\kern-0.05em P$}}
\def\nmark{\mbox{$\rm\bf\rule{0.06em}{1.45ex}\kern-0.05em N$}}
\def\vdash{\mbox{$\rm\| \kern-0.13em -$}}
\def\rmark{\mbox{$\rm\bf\rule{0.06em}{1.45ex}\kern-0.05em R$}}
\def\pmark{\mbox{$\rm\bf\rule{0.06em}{1.45ex}\kern-0.05em P$}}
\def\nmark{\mbox{$\rm\bf\rule{0.06em}{1.45ex}\kern-0.05em N$}}
\def\vdash{\mbox{$\rm\| \kern-0.13em -$}}
\newcommand{\lusim}[1]{\smash{\underset{\raisebox{1.2pt}[0cm][0cm]{$\sim$}}
{{#1}}}}
\title[Changing measurable into small accessible cardinals]{Changing measurable into small accessible cardinals}
\author[M. Golshani]{Mohammad Golshani}
\thanks{The  author's research has been supported by a grant from IPM (No. 97030417).}
\begin{document}

%\subjclass[2010]{03E05, 03E35, 03E50, 03E55}

 \maketitle

\begin{abstract}
We give a detailed proof of the properties of the usual Prikry type forcing notion for turning a measurable cardinal into $\aleph_\omega$.
\end{abstract}

\section{introduction}
In this short note, we present a proof of the following known result.
\begin{theorem}
\label{main theorem}
Assume $\GCH$ holds and $\kappa$ is a measurable cardinal. Then there exists a generic extension in which $\kappa=\aleph_\omega$.
\end{theorem}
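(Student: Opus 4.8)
The plan is to realize the extension by a single Prikry-type forcing $\PP$ on $\kappa$ that interleaves the usual Prikry forcing with L\'evy collapses filling the gaps between consecutive Prikry points, and then to verify for $\PP$ the Prikry property, the $\kappa^{+}$-chain condition, and the correct cardinal structure below $\kappa$. First I would fix a normal measure $U$ on $\kappa$; as $\kappa$ is inaccessible, the set of inaccessible cardinals below $\kappa$ lies in $U$, so every relevant large set may be taken to consist of inaccessibles. A condition of $\PP$ carries a finite increasing \emph{stem} $\langle\alpha_0,\dots,\alpha_{n-1}\rangle$ of inaccessibles below $\kappa$ together with L\'evy-collapse conditions gluing the gaps below the stem, namely $h_0\in\Col(\omega,{<}\alpha_0)$ and $h_i\in\Col(\alpha_{i-1}^{+},{<}\alpha_i)$ for $0<i<n$, and an \emph{upper part} consisting of a reservoir $A\in U$ of inaccessibles above $\alpha_{n-1}$, a function $F$ on $A$ with $F(\alpha)\in\Col(\alpha_{n-1}^{+},{<}\alpha)$ for the gap that will open below $\alpha$ once $\alpha$ enters the stem, and a L\'evy-collapse condition in $\Col(\alpha_{n-1}^{+},{<}\kappa)$ for the current top gap (when $n=0$ one reads $\alpha_{n-1}^{+}$ as $\omega$). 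The full order $\le$ lets one end-extend the stem by finitely many points from the reservoir (absorbing the corresponding values of $F$ and splitting the top-gap condition accordingly), shrink reservoirs, and strengthen all collapse conditions; the direct extension $\le^{*}$ is the part of $\le$ that leaves the stem fixed. The generic $G$ then yields a cofinal sequence $\langle\kappa_n:n<\omega\rangle$ of $V$-inaccessibles (the union of the stems) and, for each $n$, a generic filter for $\Col(\kappa_{n-1}^{+},{<}\kappa_n)$.

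Next I would prove the usual package of lemmas. \emph{Closure of the direct extension:} above a condition whose stem ends at $\alpha$, the order $\le^{*}$ is ${<}\alpha^{+}$-closed, since the relevant collapse coordinates $\Col(\alpha^{+},{<}\beta)$ and $\Col(\alpha^{+},{<}\kappa)$ are ${<}\alpha^{+}$-closed and $U$ is $\kappa$-complete; the only genuinely non-closed coordinate is $\Col(\omega,{<}\alpha_0)$ below the first Prikry point, which is a forcing of size ${<}\kappa$ and is peeled off and handled separately. \emph{The Prikry property:} every sentence $\sigma$ of the forcing language is decided by some $\le^{*}$-extension of any given condition $p$; this is the crux of the proof. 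Working above the stem of $p$, one shrinks the reservoir and strengthens $F$ so that for each $k$ the question of whether a $k$-point stem-extension can be driven by a further direct extension to decide $\sigma$ has an answer depending, on a large set, only on $k$; a diagonal intersection over all $k$ — using the $\kappa$-completeness of $U$ together with the closure just noted, to amalgamate the strengthenings of $F$ and of the top-gap condition — produces a single $\le^{*}$-extension $q$ of $p$ with this homogeneity, from which a short argument shows that $q$ itself decides $\sigma$. \emph{Chain condition:} under $\GCH$ there are only $\kappa$ many ``essential parts'' of a condition (the stem, the collapses below it, and the top-gap condition all lie in sets of size ${<}\kappa$, and there are $\kappa$ of them), so since any two reservoirs in $U$ meet, a routine counting argument gives that $\PP$ has the $\kappa^{+}$-chain condition. \emph{Factoring:} below a condition with stem $\langle\alpha_0,\dots,\alpha_{n-1}\rangle$, $\PP$ is isomorphic to $\Col(\omega,{<}\alpha_0)\times\prod_{0<i<n}\Col(\alpha_{i-1}^{+},{<}\alpha_i)\times\PP_{\ge\alpha_{n-1}}$, where $\PP_{\ge\alpha_{n-1}}$ is a forcing of the same kind based at $\alpha_{n-1}$, with direct-extension order ${<}\alpha_{n-1}^{+}$-closed.

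Finally I would assemble the conclusion. The $\kappa^{+}$-chain condition preserves all $V$-cardinals ${\ge}\kappa^{+}$. Let $G$ be generic with sequence $\langle\kappa_n:n<\omega\rangle$; a density argument (any condition can be extended by a new Prikry point above any prescribed $\beta<\kappa$, since its reservoir is unbounded in $\kappa$) shows the $\kappa_n$ are cofinal in $\kappa$, so $\cf^{V[G]}(\kappa)=\omega$. By the factoring lemma, for each $n$ the model $V[G]$ arises from $V$ by first forcing with $\Col(\omega,{<}\kappa_0)\times\prod_{0<i\le n}\Col(\kappa_{i-1}^{+},{<}\kappa_i)$ — which is the honest generic, by a density argument inside $\PP$, so it collapses every $V$-cardinal below $\kappa_0$ to $\omega$ and every $V$-cardinal in $(\kappa_{i-1}^{+},\kappa_i)$ to $\kappa_{i-1}^{+}$ — and then with $\PP_{\ge\kappa_n}$, whose direct-extension order is ${<}\kappa_n^{+}$-closed, so by the closure and Prikry lemmas $\PP_{\ge\kappa_n}$ adds no new ${<}\kappa_n^{+}$-sequences of ordinals and hence collapses no cardinal ${\le}\kappa_n^{+}$. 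Therefore in $V[G]$ the infinite cardinals below $\kappa$ are exactly $\omega,\kappa_0,\kappa_0^{+},\kappa_1,\kappa_1^{+},\dots$, so $\kappa_n=\aleph_{2n+1}^{V[G]}$ and $\kappa=\sup_n\kappa_n=\aleph_\omega^{V[G]}$, while $\kappa$, being a limit of $V[G]$-cardinals, is itself a cardinal of $V[G]$. The one place where genuine care is needed is the proof of the Prikry property in the presence of the collapse coordinates; the remaining lemmas are careful but routine adaptations of the standard facts about Prikry forcing and the L\'evy collapse.
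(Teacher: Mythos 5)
There is a genuine gap, and it is exactly the device the paper's whole construction is built around: the \emph{guiding generic}. In the paper, the upper part of a condition is a function $F$ on $A\in\mathcal U$ with $F(\delta)\in\Col(\delta^{++},<\kappa)$, i.e.\ a promise about the collapse that will sit \emph{above} $\delta$ once $\delta$ becomes a Prikry point, and crucially one requires $[F]_{\mathcal U}\in H$, where $H\in V$ is a filter on $\Col(\kappa^{++},<j(\kappa))_M$ generic over $M=\Ult(V,\mathcal U)$; its existence (Lemma \ref{existence of guiding generics}) is where $\GCH$ is really used ($\kappa^+$-closure in $V$ plus only $\kappa^+$ many maximal antichains in $M$). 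This constraint is what makes two conditions with the same stem compatible (their $[F]$'s lie in the common filter $H$), giving the $\kappa^+$-c.c., and it is what powers the Prikry lemma: each of the $\kappa$ many stems imposes a strengthening of $[F]_{\mathcal U}$, and these are amalgamated using the genericity and closure of $H$ over $M$ (Claims \ref{reduction to stems} and \ref{reduction to stem minus top elemet}, where a dense subset of $\Col(\kappa^{++},<j(\kappa))_M$ is met by $H$). Your design has no counterpart of $H$, and this breaks both pillars. For the chain condition, your ``only $\kappa$ many essential parts'' count silently omits the function $F$ on the reservoir: there are $2^\kappa=\kappa^+$ many such functions, and two conditions with identical stem and top-gap condition but pointwise incompatible $F$'s (mod $\mathcal U$) admit no common extension, so the routine counting argument does not give the $\kappa^+$-c.c.\ for the forcing as you defined it.

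The Prikry property is worse, and you yourself flag it as the crux without supplying the needed mechanism. Your amalgamation step (``diagonal intersection \dots using the $\kappa$-completeness of $U$ together with the closure just noted, to amalgamate the strengthenings of $F$ and of the top-gap condition'') cannot work as stated: in your design $F(\alpha)$ lies in $\Col(\alpha_{n-1}^{+},<\alpha)$, so the relevant ``amalgamation poset'' $\Col(\alpha_{n-1}^{+},<j(\kappa))_M$ is only $\alpha_{n-1}^{+}$-closed, far too little to handle the $\kappa$ many stems, and no generic for it can be built in $V$; moreover the collapse that opens up \emph{above} a prospective new point (your new top-gap coordinate in $\Col(\alpha^{+},<\kappa)$) is completely unconstrained by the original condition, so no shrinking of the reservoir or strengthening of existing coordinates can uniformize decisions of $\phi$ over those choices. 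This is precisely why the standard construction attaches to each $\alpha\in A$ a condition in the collapse \emph{above} $\alpha$ and ties $[F]_{\mathcal U}$ to a $V$-constructed $M$-generic for $\Col(\kappa^{++},<j(\kappa))_M$. So you should either adopt the paper's design (guiding generic plus clause $[F]_{\mathcal U}\in H$, then your Steps on closure, factoring, and the cardinal arithmetic go through essentially as you describe, with the first-gap collapse $\Col(\omega,<\delta_0)$ done afterwards as in the paper or peeled off as you suggest), or supply a genuinely different argument for the $\kappa^+$-c.c.\ and the Prikry property — the sketch as written does not contain one.
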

We try to give the details as much as possible to make it accessible to general audience who has some familiarity with forcing and large cardinals (see \cite{jech} for preliminaries).
\section{Proof of Theorem \ref{main theorem}}
Suppose that $\GCH$ holds and $\kappa$ is a measurable cardinal. Let $\mathcal U$ be a normal measure on $\kappa.$ Let also $j: V \to M \simeq \Ult(V, \mathcal)$
be the corresponding ultrapower embedding.
\begin{lemma}
\label{existence of guiding generics}
There exists $H \in V$ which a $\Col(\kappa^{++}, < j(\kappa))_M$-generic filter over $M$.
\end{lemma}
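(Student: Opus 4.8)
The plan is to build $H$ directly inside $V$ by a transfinite recursion of length $\kappa^{+}$, constructing a descending sequence of conditions that meets every maximal antichain of $Q:=\Col(\kappa^{++},<j(\kappa))^{M}$ lying in $M$, and then taking $H$ to be the filter this sequence generates. Two observations about $M$ make this possible. First, since $\GCH$ holds, $\card{j(\kappa)}^{V}\le 2^{\kappa}=\kappa^{+}$ — every ordinal below $j(\kappa)$ is $[f]_{\mathcal U}$ for some $f\colon\kappa\to\kappa$ — and in particular $j(\kappa)<\kappa^{++}$. Second, $j(\kappa)$ is inaccessible in $M$ by elementarity, so in $M$ the poset $Q$ has cardinality $j(\kappa)$ and satisfies the $j(\kappa)$-chain condition; consequently every maximal antichain of $Q$ in $M$ has size $<j(\kappa)$, and since $j(\kappa)^{<j(\kappa)}=j(\kappa)$ in $M$ there are at most $j(\kappa)$ of them — hence at most $\kappa^{+}$ of them as counted in $V$.

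I would then fix in $V$ an enumeration $\langle A_{\alpha}\mid\alpha<\kappa^{+}\rangle$ of all maximal antichains of $Q$ belonging to $M$ (repeating one cofinally if there are fewer than $\kappa^{+}$), and recursively build a descending sequence $\langle p_{\alpha}\mid\alpha<\kappa^{+}\rangle$ of conditions in $Q$ as follows. Put $p_{0}=\emptyset$; given $p_{\alpha}$, use maximality of $A_{\alpha}$ to pick $a\in A_{\alpha}$ compatible with $p_{\alpha}$ and let $p_{\alpha+1}$ be a common extension of $p_{\alpha}$ and $a$, still a condition since it is a function of size $\le\kappa^{+}<(\kappa^{++})^{M}$ and lies in $M$; at a limit $\delta<\kappa^{+}$ let $p_{\delta}=\bigcup_{\alpha<\delta}p_{\alpha}$. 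The reason for stopping at $\kappa^{+}$ is that every limit ordinal $\delta<\kappa^{+}$ has cofinality $\le\kappa$, so $p_{\delta}$ is really the union of a descending sequence of length $\le\kappa$ of elements of $M$; by the closure ${}^{\kappa}M\subseteq M$ of the ultrapower that short sequence, hence its union $p_{\delta}$, lies in $M$, and $p_{\delta}$ is a partial function of size $\le\kappa^{+}$, so $p_{\delta}\in Q$ and is a lower bound of $\langle p_{\alpha}\mid\alpha<\delta\rangle$. Finally, letting $H$ be the upward closure in $Q$ of $\{p_{\alpha}\mid\alpha<\kappa^{+}\}$, we get $H\in V$, $H$ is a filter, and $H$ meets every $A_{\alpha}$ and so every maximal antichain of $Q$ lying in $M$; therefore $H$ is $\Col(\kappa^{++},<j(\kappa))^{M}$-generic over $M$.

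The one real point is that $M$ is closed under $\kappa$-sequences but not under $\kappa^{+}$-sequences, whereas $\Col(\kappa^{++},<j(\kappa))$ has conditions of size up to $\kappa^{+}$ and is only $\kappa^{+}$-closed: a naive recursion of length $\kappa^{++}$ running through dense open subsets of $Q$ would break at limit stages of cofinality $\kappa^{+}$, where the relevant $\kappa^{+}$-length descending sequence of conditions need not belong to $M$, so that its union need not be a condition. Passing to maximal antichains and invoking the $j(\kappa)$-chain condition to reduce the number of requirements to $\kappa^{+}$ is precisely what keeps the recursion below the closure threshold of $M$, so that every limit union stays in $M$. The remaining points are routine: $(\kappa^{+})^{M}=\kappa^{+}$ and $(\kappa^{++})^{M}>\kappa^{+}$, so that a partial function of size $\le\kappa^{+}$ really is a condition of $Q$; the standard computation $\card{Q}^{M}=j(\kappa)$ with $\card{j(\kappa)}^{V}\le\kappa^{+}$, used above to count antichains; and the observation that meeting every maximal antichain of $Q$ lying in $M$ suffices for genericity over $M$, since in $M$ every dense open subset of $Q$ contains a maximal antichain.
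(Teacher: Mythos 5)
Your proof is correct and is essentially the argument the paper compresses into its two stated facts (from $V$'s viewpoint the poset $\Col(\kappa^{++},<j(\kappa))_M$ is $\kappa^{+}$-closed, and there are at most $\kappa^{+}$ maximal antichains of it lying in $M$), so it takes the same route as the paper, just written out in full. One small point to tighten: at limit stages, conditionhood of $p_{\delta}$ should be justified by noting that the $(\le\kappa)$-length cofinal subsequence lies in $M$ and that $(\kappa^{++})^{M}$ is regular in $M$, so $M$ itself computes the union to have size $<(\kappa^{++})^{M}$ --- having $V$-cardinality $\le\kappa^{+}$ and membership in $M$ alone does not bound the $M$-cardinality below $(\kappa^{++})^{M}$ (e.g.\ the ordinal $(\kappa^{++})^{M}$ has $V$-cardinality $\kappa^{+}$) --- but this is immediate from what you have already established.
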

\begin{proof}
We have
\begin{enumerate}
\item $V \models$``$\Col(\kappa^{++}, < j(\kappa))_M$ is $\kappa^+$-closed''
\item $V \models$``$|\{A \in M: A$ is a maximal antichain in $\Col(\kappa^{++}, < j(\kappa))_M \}| \leq \kappa^+$''.
\end{enumerate}
Thus we can easily find the required $H$.
\end{proof}
We are now ready to define our main forcing construction.
\begin{definition}
A condition in $\MPB$ is of the form
\[
p= (\delta_0, f_0 \dots,  \delta_{n-1}, f_{n-1}, A, F)
\]
where
\begin{enumerate}
\item $n<\omega$.
\item $\delta_0 < \dots < \delta_{n-1}$.
\item For $i<n-1, f_i \in \Col(\delta_{i}^{++}, < \delta_{i+1})$.
\item $f_{n-1} \in \Col(\delta_{n-1}^{++}, < \kappa)$.
\item $A \in \mathcal U,$ and $\min(A) > \delta_{n-1}$.
\item $F$ is a function with $\dom(F)=A.$
\item For every $\delta \in A, F(\delta) \in \Col(\delta^{++}, < \kappa)$.
\item $[F]_{\mathcal U} \in H.$
\end{enumerate}
\end{definition}
Given a condition $p \in \MPB,$ we denote it by
\[
p= (\delta^p_0, f^p_0 \dots,  \delta^p_{n^p-1}, f^p_{n^p-1}, A^p, F^p).
\]
We also set
\begin{itemize}
\item $stem(p)=(\delta^p_0, f^p_0 \dots,  \delta^p_{n^p-1}, f^p_{n^p-1})$, the stem of $p$.
\item $u(p)=(A^p, F^p),$ the upper part of $p$.
\end{itemize}
\begin{definition}
Suppose $p, q \in \MPB.$
\begin{itemize}
\item [(a)] $p$ is an extension of $q$, $p \leq q$,  iff
\begin{enumerate}
\item $n^p \geq n^q$.
\item For all $i< n^q, \delta^p_i=\delta^q_i$.
\item For $n^q \leq i < n^p, \delta^p_i \in A^q$.
\item For $i < n^q, f^p_i \leq f^q_i$.
\item For $n^q \leq i < n^p, f^p_i \leq F^q(\delta^p_{i})$.
\item $A^p \subseteq A^q.$
\item For each $\delta \in A^p, F^p(\delta) \leq F^q(\delta)$.
\end{enumerate}
\item [(b)] $p$ is a direct extension of $q$, $p \leq^* q$, iff
\begin{enumerate}
\item $p \leq q.$
\item $n^p=n^q$.
\end{enumerate}
\end{itemize}
\end{definition}
We start by proving the basic properties of the forcing notion $\MPB$.
\begin{lemma}
\label{chain condition}
$(\MPB, \leq)$ satisfies the $\kappa^+$-c.c.
\end{lemma}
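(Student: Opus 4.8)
The plan is to show that $(\MPB,\leq)$ has the $\kappa^+$-c.c.\ via a standard $\Delta$-system argument, exploiting the fact that two conditions with the same stem are automatically compatible. First I would observe that if $p,q\in\MPB$ satisfy $\stem(p)=\stem(q)$, then they have a common extension: since the coordinates $\delta^p_0,\dots,\delta^p_{n-1}$ and the Collapse conditions $f^p_i$ agree with those of $q$, it suffices to set $A=A^p\cap A^q$ (which lies in $\mathcal U$ since $\mathcal U$ is a filter) and $F(\delta)=F^p(\delta)\wedge F^q(\delta)$ for $\delta\in A$ — here I use that $\Col(\delta^{++},<\kappa)$ is a ${<}\delta^{++}$-closed (indeed a tree-like) separative partial order so the meet of two compatible, or here simply the greatest lower bound in the natural ordering, makes sense; more carefully, one takes a condition below both, which exists because for each $\delta$ the two conditions $F^p(\delta),F^q(\delta)$ are compatible — and then checks $[F]_{\mathcal U}\in H$, which follows because $[F]_{\mathcal U}\leq[F^p]_{\mathcal U},[F^q]_{\mathcal U}$, both of which lie in the filter $H$, so $[F]_{\mathcal U}\in H$ as well. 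Thus compatibility of conditions depends only on their stems.

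**Next**, I would count stems. A stem is a finite sequence $(\delta_0,f_0,\dots,\delta_{n-1},f_{n-1})$ with $\delta_0<\dots<\delta_{n-1}<\kappa$ and $f_i\in\Col(\delta_i^{++},<\delta_{i+1})$ for $i<n-1$ and $f_{n-1}\in\Col(\delta_{n-1}^{++},<\kappa)$. For a fixed increasing tuple $\bar\delta=(\delta_0,\dots,\delta_{n-1})$ below $\kappa$, the number of possible Collapse-sequences is at most $\prod_{i<n-1}|\Col(\delta_i^{++},<\delta_{i+1})|\cdot|\Col(\delta_{n-1}^{++},<\kappa)|$. Under $\GCH$ each factor $|\Col(\delta_i^{++},<\delta_{i+1})|\le\delta_{i+1}^{<\delta_{i+1}}=\delta_{i+1}\le\kappa$, and the last factor is $|\Col(\delta_{n-1}^{++},<\kappa)|\le\kappa^{<\kappa}=\kappa$; so for each $\bar\delta$ there are at most $\kappa$ stems, and since there are at most $\kappa$ finite increasing tuples below $\kappa$, the total number of stems is at most $\kappa$. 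Hence if $\{p_\xi:\xi<\kappa^+\}\subseteq\MPB$ is a set of $\kappa^+$ conditions, by pigeonhole $\kappa^+$ many of them share a single stem, and by the first step any two of those are compatible; therefore the original set was not an antichain.

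**The main obstacle** — really the only point requiring care — is the verification in the first step that the ``glued'' upper part $(A,F)$ is a legitimate condition, specifically clause (8), $[F]_{\mathcal U}\in H$. This needs two things: that $F^p(\delta)$ and $F^q(\delta)$ are genuinely compatible in $\Col(\delta^{++},<\kappa)$ for $\mathcal U$-almost every $\delta$ (not just that a lower bound exists coordinatewise in some ambient sense), and that the resulting $[F]_{\mathcal U}$ sits below both $[F^p]_{\mathcal U}$ and $[F^q]_{\mathcal U}$ in $\Col(\kappa^{++},<j(\kappa))_M$, so that membership in the filter $H$ is inherited downward. The first is immediate if we only claim \emph{some} common extension exists: we do not need $F^p(\delta)$ and $F^q(\delta)$ to be compatible for all $\delta$; rather, since $[F^p]_{\mathcal U}$ and $[F^q]_{\mathcal U}$ both lie in the filter $H$, they are compatible in $M$, so there is $g\in\Col(\kappa^{++},<j(\kappa))_M\cap H$ with $g\le[F^p]_{\mathcal U},[F^q]_{\mathcal U}$, and by {\L}o\'s's theorem $g=[F']_{\mathcal U}$ for some function $F'$ with $F'(\delta)\le F^p(\delta),F^q(\delta)$ for $\mathcal U$-almost all $\delta$; shrinking $A$ to $A^p\cap A^q\cap\{\delta:F'(\delta)\le F^p(\delta),F^q(\delta)\}\in\mathcal U$ and using $F'\restricted A$ gives the desired common extension. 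Everything else — the closure and cardinality bounds under $\GCH$, the finiteness of stems — is routine.
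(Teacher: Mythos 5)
Your proof is correct and is essentially the paper's argument: there are at most $\kappa$ stems, so among $\kappa^+$ conditions two share a stem, and such conditions are glued by taking a common lower bound $[F']_{\mathcal U}\in H$ of $[F^p]_{\mathcal U}$ and $[F^q]_{\mathcal U}$ (using that $H$ is a filter) and shrinking the measure-one set via {\L}o\'s's theorem. The coordinatewise-meet suggestion in your first paragraph is not generally available, but your final paragraph correctly replaces it with exactly this argument, so nothing is missing.
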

\begin{proof}
Let $A \subseteq \MPB$ be of size $\kappa^+$. Then, as
\[
\{stem(p): p \in A       \} \subseteq V_\kappa
\]
has size $\kappa$, we can find $p, q \in A$ such that $stem(p)=stem(q).$
We claim that $p$ an $q$ are compatible.
Since $[F^p]_{\mathcal U}, [F^q]_{\mathcal U} \in H,$ we can find $[F]_{\mathcal U} \in H$
such that $[F]_{\mathcal U} \leq [F^p]_{\mathcal U}, [F^q]_{\mathcal U}.$
Thus
\[
A^\ast=\{\delta < \kappa: F(\delta) \leq F^p(\delta), F^q(\delta)   \} \in \mathcal{U}.
\]
Set $A=A^\ast \cap A^p \cap A^q \in \mathcal{U}.$
Then
\[
stem(p)^{\frown} (A, F) \in \MPB
\]
and it extends both of $p$ and $q$.
\end{proof}
\begin{lemma}
\label{factorization}
Suppose $p \in \MPB$ and $m < n^p$. Then
\[
\MPB/p \simeq (\prod_{i<m} \Col((\delta^p_i)^{++}, <\delta^p_{i+1})) \times \MPB / p^{\geq m},
\]
where
$p^{\geq m} = (\delta^p_m, f^p_m, \dots, \delta^p_{n^p-1}, f^p_{n^p-1}, A^p, F^p)$. Further,
$(\MPB/ p^{\geq m}, \leq^*)$ is $\delta_{m}^{++}$-closed.
\end{lemma}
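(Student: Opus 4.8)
The plan is to prove the two assertions separately, the displayed forcing isomorphism being coordinate-by-coordinate bookkeeping and the closure statement carrying the real content. For the isomorphism: since $m<n^p$, every $q\leq p$ satisfies $\delta^q_i=\delta^p_i$ for all $i\leq m$ and $f^q_i\leq f^p_i$ for $i<m$, so I would send $q$ to the pair $\bigl(\langle f^q_i:i<m\rangle,\,q^{\geq m}\bigr)$, where $q^{\geq m}=(\delta^q_m,f^q_m,\dots,\delta^q_{n^q-1},f^q_{n^q-1},A^q,F^q)$. Unwinding the definitions of $\MPB$ and of $\leq$ one then checks: $q^{\geq m}\in\MPB$ and $q^{\geq m}\leq p^{\geq m}$; $\langle f^q_i:i<m\rangle$ lies in $\prod_{i<m}\bigl(\Col((\delta^p_i)^{++},<\delta^p_{i+1})/f^p_i\bigr)$; the assignment is a bijection onto this product, with concatenation of the two pieces of data as its inverse; and, running through clauses (1)--(7) of the extension relation one index at a time, it preserves and reflects $\leq$, hence is an isomorphism of forcings. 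Finally, for regular $\mu$ and $f\in\Col(\mu,<\nu)$ the suborder of conditions below $f$ is isomorphic to $\Col(\mu,<\nu)$ (rearrange the fewer-than-$\mu$ many coordinates occurring in $\dom f$ within their fibres); applying this in each of the $m$ factors removes the ``$/f^p_i$'' and yields the form stated in the lemma. This half is purely mechanical.

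For the closure, fix $\lambda<(\delta^p_m)^{++}$ and a $\leq^*$-decreasing sequence $\langle q_\xi:\xi<\lambda\rangle$ of conditions below $p^{\geq m}$. Since $q'\leq^* q$ means $q'\leq q$ together with $n^{q'}=n^q$, and clause (2) then makes all stem ordinals of $q'$ and $q$ agree, every $q_\xi$ has one and the same length $n:=n^{q_0}$ and the same stem ordinals $e_0<\dots<e_{n-1}$ with $e_0=\delta^p_m$; only the collapse conditions, the measure-one set, and the ``$F$''-component shrink as $\xi$ increases. Writing $q_\xi=(e_0,g^\xi_0,\dots,e_{n-1},g^\xi_{n-1},A^\xi,F^\xi)$, I would take the pointwise lower bound $g_i:=\bigcup_{\xi<\lambda}g^\xi_i$, $A:=\bigcap_{\xi<\lambda}A^\xi$, $F(\delta):=\bigcup_{\xi<\lambda}F^\xi(\delta)$ for $\delta\in A$, and set $q:=(e_0,g_0,\dots,e_{n-1},g_{n-1},A,F)$; granting $q\in\MPB$, one has $q\leq^* q_\xi$ for all $\xi$ at once. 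To see $q\in\MPB$: each collapse $\Col((e_i)^{++},<e_{i+1})$ ($i<n-1$) is $(e_i)^{++}$-closed and $\Col((e_{n-1})^{++},<\kappa)$ is $(e_{n-1})^{++}$-closed, and all these closure points are $\geq(\delta^p_m)^{++}>\lambda$, so each $g_i$ is a condition and a common lower bound of the $g^\xi_i$; $\kappa$-completeness of $\mathcal U$ together with $\lambda<(\delta^p_m)^{++}<\kappa$ (recall $\kappa$ is inaccessible) gives $A\in\mathcal U$, while $\min A>e_{n-1}$ is clear; and for $\delta\in A$ we get $F(\delta)\in\Col(\delta^{++},<\kappa)$ because $\delta^{++}>\lambda$ (as $\delta>\delta^p_m$) and $\Col(\delta^{++},<\kappa)$ is $\delta^{++}$-closed.

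The one clause needing a genuine argument --- and the heart of the lemma --- is $[F]_{\mathcal U}\in H$. Since $\lambda<\crit(j)=\kappa$, applying $j$ and evaluating at $\kappa$, using $j(\langle F^\xi:\xi<\lambda\rangle)=\langle j(F^\xi):\xi<\lambda\rangle$, gives $[F]_{\mathcal U}=\bigcup_{\xi<\lambda}[F^\xi]_{\mathcal U}$, so it suffices that this union lie in $H$. The sequence $\langle[F^\xi]_{\mathcal U}:\xi<\lambda\rangle$ is $\leq$-decreasing in $\Col(\kappa^{++},<j(\kappa))_M$, each of its terms is in $H$ by clause (8) of the definition of $\MPB$ applied to the $q_\xi$, it belongs to $M$ because $\lambda<\kappa$ and ${}^\kappa M\subseteq M$, and $\lambda<\kappa^{++}$; since $\Col(\kappa^{++},<j(\kappa))_M$ is $\kappa^{++}$-closed in $M$, the pointwise union of the sequence is its greatest lower bound, and a standard density argument puts it into the generic $H$: the set of conditions that either refine this union or are incompatible with some term of the sequence is dense and lies in $M$, and a member of $H$ in that set cannot be of the incompatible kind. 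Hence $[F]_{\mathcal U}\in H$, so $q\in\MPB$ and $q$ is a $\leq^*$-lower bound of $\langle q_\xi\rangle$, as required. I expect the isomorphism part to be entirely routine; the sole delicate point is this last step, which rests on $H$ being generic over $M$ for the highly closed forcing $\Col(\kappa^{++},<j(\kappa))_M$ and on $M$ being closed under $\kappa$-sequences.
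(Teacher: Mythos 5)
Your proof is correct. The paper in fact states Lemma \ref{factorization} without any proof (it is treated as routine), so there is nothing to compare against; what you supply is the standard argument, and all the essential points are in place: the coordinatewise bijection $q\mapsto(\langle f^q_i: i<m\rangle, q^{\geq m})$ for the factorization (including the observation that the cone below $f^p_i$ in a Levy collapse is isomorphic to the full collapse, which is needed for the literal form of the statement), and, for the closure of $(\MPB/p^{\geq m},\leq^*)$, the pointwise unions/intersection together with the only non-trivial clause, namely $[F]_{\mathcal U}\in H$. Your treatment of that clause is exactly right: the identity $[F]_{\mathcal U}=\bigcup_{\xi<\lambda}[F^\xi]_{\mathcal U}$ via $\lambda<\crit(j)$, the fact that the sequence $\langle [F^\xi]_{\mathcal U}:\xi<\lambda\rangle$ lies in $M$ by ${}^{\kappa}M\subseteq M$, and the genericity/density argument using the $\kappa^{++}$-closure of $\Col(\kappa^{++},<j(\kappa))_M$ to put the greatest lower bound into $H$. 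This is precisely the point the whole construction (Lemma \ref{existence of guiding generics}) is designed to make work, and your write-up could serve as the missing proof.
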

We now show that the forcing notion $(\MPB, \leq, \leq^*)$
satisfies the Prikry property.
\begin{lemma}
\label{prikry property}
Suppose $q \in \MPB$ and $\phi$ is a statement of the forcing language of $(\MPB, \leq)$. Then there exists $p \leq^* q$
which decides $\phi$.
\end{lemma}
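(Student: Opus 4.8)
The plan is to run the classical Prikry-lemma argument (in the style of Mathias), the one genuinely new feature being the treatment of the Lévy collapses pinned to the Prikry points one adds. Fix $q$ and the statement $\phi$; the stem $\stem(q)$ plays no active role in what follows and is simply carried along, so one concentrates on the upper part and on stem‑extensions of $q$. I first record the closure fact that licenses the $\omega$‑step fusion below: $(\MPB/q,\leq^*)$ is $\sigma$‑closed. Indeed, given a $\leq^*$‑decreasing $\omega$‑sequence $\langle q_k\rangle_{k<\omega}$ with the same stem and upper parts $(A_k,F_k)$, one takes $A_\infty=\bigcap_k A_k\in\mathcal U$ (by $\kappa$‑completeness of $\mathcal U$), and for the lower‑bound function $F_\infty$ uses that each $\Col(\delta^{++},<\kappa)$ is $\delta^{++}$‑closed together with the fact that $H$, being generic over $M$ (which contains the relevant $\omega$‑sequences, as ${}^\omega M\subseteq M$) for the $\kappa^+$‑closed forcing $\Col(\kappa^{++},<j(\kappa))_M$, contains a common lower bound of $\langle[F_k]_{\mathcal U}\rangle_k$; choosing $F_\infty$ to represent that lower bound gives $(\stem(q),A_\infty,F_\infty)\leq^* q_k$ for all $k$.

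The heart of the argument is a fusion producing $q^*\leq^* q$ with the following property: for every $n<\omega$ and every stem‑extension $t$ of $\stem(q^*)$ obtained by appending $n$ admissible pairs $(\delta_0,f_0),\dots,(\delta_{n-1},f_{n-1})$ (so $\delta_i\in A^{q^*}$ increasing and $f_i\leq F^{q^*}(\delta_i)$), writing $q^*\restricted t$ for the condition with stem $t$ whose upper part is that of $q^*$ restricted to the ordinals above $\delta_{n-1}$: (a) if some extension of $q^*\restricted t$ decides $\phi$, then $q^*\restricted t$ already decides $\phi$; and (b) the resulting decision depends only on $n$, not on $(\delta_i,f_i)_{i<n}$. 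To build $q^*$ one performs an $\omega$‑step $\leq^*$‑fusion, at stage $n$ arranging (a)–(b) for $n$‑point stem‑extensions, and then takes the $\leq^*$‑lower bound (which exists by the $\sigma$‑closure just noted). Within stage $n$ one processes the $n$ new Prikry points front‑to‑back; for a single new point the choice of \emph{upper part} is handled exactly as in the plain Prikry lemma, by a diagonal intersection over the candidate next points (using the normality of $\mathcal U$) selecting, for each candidate, a witnessing upper part, followed by a homogenization (using $\kappa$‑completeness via the partition relation for normal ultrafilters) so that decidability, and hence the decision, becomes uniform. The Lévy collapses are what is new: the collapse conditions sitting \emph{between} two consecutive new points range over a set of size $<\delta_{n-1}^{++}$, so — using that the corresponding tail is $\delta_{n-1}^{++}$‑closed by Lemma \ref{factorization} — all of them can be handled simultaneously by one $\leq^*$‑extension; and the \emph{active} collapse $\Col(\delta_{n-1}^{++},<\kappa)$ at the last new point, though of size $\kappa$, is itself $\delta_{n-1}^{++}$‑closed, so one reduces to it by replacing the relevant lower bound $F^{q^*}(\delta_{n-1})$ by a single condition that already decides $\phi$ as far as $\phi$ will ever be decided there (taking care that this condition has bounded support, so that further Prikry points can still be added beyond it).

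Finally one pulls back, exactly as for ordinary Prikry forcing. Suppose toward a contradiction that $q^*$ does not decide $\phi$. Some extension of $q^*$ does, so by (a) some $q^*\restricted t$ does; pick such a $t$ with the number $n$ of appended points least, so $n\geq1$, say $q^*\restricted t\forces\phi$. Let $t^-$ be $t$ with its last pair removed. By minimality $q^*\restricted t^-$ does not decide $\phi$; yet by (b) the decision attached to $n$‑point stem‑extensions is ``$\phi$'', so every one‑point extension $q^*\restricted(t^-{}^\frown(\delta,f))$ forces $\phi$. Hence for any generic $G$ containing $q^*\restricted t^-$, the next Prikry point and the collapse condition it receives put into $G$ a condition extending some $q^*\restricted(t^-{}^\frown(\delta,f))$, so $\phi$ holds in $V[G]$; thus $q^*\restricted t^-\forces\phi$, a contradiction. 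Therefore $q^*$ decides $\phi$, and $q^*\leq^* q$ is as required.

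The main obstacle is the fusion of the second paragraph, and within it the need to reconcile the two kinds of extension: adding Prikry points, where one has available only the $\kappa$‑completeness and normality of $\mathcal U$ (exactly enough for the diagonal intersections and homogenizations over $[\kappa]^{<\omega}$), and the Lévy collapses $\Col(\delta^{++},<\kappa)$ pinned to those points, which are of size $\kappa$ and so cannot be absorbed by $\mathcal U$ at all. The device that makes them cooperate is to exploit the closure degrees of Lemma \ref{factorization} — the collapse at a new point $\delta$, together with the whole tail above it, is $\delta^{++}$‑closed — using it both to dispose of the (few) collapse conditions that have already been frozen between consecutive Prikry points and, for the single active collapse, to shrink its lower bound to a deciding condition. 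Checking that all these closure degrees line up, keeping the supports of the collapse conditions small enough that the construction can continue, and verifying that $\sigma$‑closure suffices for the $\omega$‑length fusion, is where the care of the proof lies.
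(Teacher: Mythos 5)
Your outline follows the classical Prikry/Mathias scheme, but it has a genuine gap exactly at the point where this forcing differs from plain Prikry forcing: clause (8) of the definition of a condition, $[F]_{\mathcal U}\in H$, is never maintained in your fusion. Every $\leq^*$-extension you perform must replace the upper-part function $F$ by a new function whose $\mathcal U$-equivalence class still lies in the guiding generic $H$; so none of the choices in your stage-$n$ step can be made pointwise or ``freely by closure''. Concretely, when you ``select, for each candidate next point, a witnessing upper part'', you must amalgamate $\kappa$ many functions $F^s$ into one legal $F$ --- this needs the fact that a $\kappa$-sized subset of $H$ (which lies in $M$ by $^\kappa M\subseteq M$) has a lower bound in $H$, not just a diagonal intersection of the measure-one sets. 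Worse, your treatment of the ``active collapse'' does not work as stated: $\Col(\delta_{n-1}^{++},<\kappa)$ has $\kappa$ many conditions but only $\delta_{n-1}^{++}$-closure, so closure cannot exhaust the possible top collapse conditions; and even the existential version (``pick one condition deciding $\phi$ as far as it ever will be'') must be carried out simultaneously at every $\delta$ in a measure-one set in such a way that the resulting assignment $\delta\mapsto g_\delta$ represents a member of $H$. Arbitrary pointwise witnesses give no reason whatsoever for $[\delta\mapsto g_\delta]_{\mathcal U}\in H$, and without that the object you build is simply not a condition of $\MPB$.

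The paper's proof supplies precisely the missing device, and it is the heart of the argument: both uniformizations are done upstairs through $j$. For the upper parts, all the $[F^s]_{\mathcal U}$ lie in $H$ and one takes a single lower bound in $H$ (Claim ``reduction to stems''); for the top collapse, one shows that the set $D$ of conditions $f\in\Col(\kappa^{++},<j(\kappa))_M$ uniformizing the decision of $\phi$ for the conditions $s^\frown(\kappa,f)^\frown(j(A^1),j(F^1))$, over all $\kappa$ many stems $s\in V_\kappa$, is dense --- here the needed closure is available, since this collapse is $\kappa^+$-closed and $M$ is closed under $\kappa$-sequences --- and then genericity of $H$ produces $f=[F^2]_{\mathcal U}\in H\cap D$, which Łoś's theorem reflects to measure-one many $\delta$ (Claim ``reduction to stem minus top element''). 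After these two steps the rest is the classical three-way splitting plus diagonal intersection and the minimal-counterexample pull-back, essentially as in your last paragraph (and no $\omega$-stage fusion is needed: diagonalizing over all stems at once handles all lengths simultaneously). So the structure of your argument can be repaired, but only by inserting the guiding-generic/density-in-$M$ mechanism where you currently invoke $\delta_{n-1}^{++}$-closure of the active collapse.
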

\begin{proof}
We assume for simplicity that $n^q=0$ and $f^q_0=\emptyset.$
We write $q$ as $q=(A^q, F^q)$.
The proof has four main steps.
\begin{claim}
\label{reduction to stems}
(Reduction to stems) There exists $q^1 = (A^1, F^1) \leq^* q$ such that for any stem $s$,
\[
\exists s^{\frown}(A, F) \leq (A^1, F^1), ~ s^{\frown}(A, F) \parallel \phi \iff s^{\frown}(A^1, F^1) \parallel \phi.
\]
\end{claim}
\begin{proof}
For each stem $s$, if there exists $s^{\frown}(A, F) \leq (A^q, F^q)$
which decides $\phi,$ then let $(A^s, F^s)=(A, F)$
and otherwise set $(A^s, F^s)=(A^q, F^q)$.

Then $\{[F^s]_{\mathcal U}: s$ is a stem$ \} \subseteq H$,
and hence, we can find $[F^1]_{\mathcal U} \in H$ which extends all of them.
For each stem $s$, set
\[
B^s= \{\delta \in A^s: F^1(\delta) \leq F^s(\delta)           \} \in \mathcal U.
\]
Let also
\[
A^1 = \bigtriangleup_{s} B^s = \{\delta <\kappa: s \in V_\delta \Rightarrow \delta \in B^s                   \}.
\]
We show that $q^1 = (A^1, F^1)$ is as required. Thus suppose that $s$ is a stem and suppose there exists
$s^{\frown}(A, F) \leq (A^1, F^1)$ which decides $\phi$.

It then follows that $s^{\frown}(A^s, F^s)$ decides $\phi$. But, by our construction,
\[
s^{\frown}(A^1, F^1) \leq s^{\frown}(A^s, F^s)
\]
and hence $s^{\frown}(A^1, F^1) $ decides $\phi$ as well.
\end{proof}
Let $q^1=(A^1, F^1)$
be as in Claim \ref{reduction to stems}.
\begin{claim}
\label{reduction to stem minus top elemet}
(Reduction to stem minus top element)
There exists $q^2=(A^2, F^2) \leq^* q^1$ such that for any stem $s= (\delta_0, f_0, \dots, \delta_{n-1}, f_{n-1})$, if
$s^{\frown} (\delta_n, f_n)^{\frown} (A, F) \leq (A^2, F^2)$ and
\[
s^{\frown} (\delta_n, f_n) ^{\frown} (A, F) \parallel \phi,
\]
then
\[
s^{\frown} (\delta_n, F^2(\delta_n)) ^{\frown} (A^2, F^2) \parallel \phi.
\]
\end{claim}
\begin{proof}
Let $D$ be the set of all conditions $f \in \Col(\kappa^{++}, < j(\kappa))_M$ such that for any stem $s \in V_\kappa,$ if there exists $g \in \Col(\kappa^{++}, < j(\kappa))_M$ such that
$$s^{\frown} (\kappa, g)^{\frown} (j(A^1), j(F^1)) \parallel \phi,$$
then
$$s^{\frown} (\kappa, f)^{\frown} (j(A^1), j(F^1)) \parallel \phi.$$
We claim that $D \subseteq \Col(\kappa^{++}, < j(\kappa))_M$ is dense. Thus suppose that $g \in \Col(\kappa^{++}, < j(\kappa))_M.$
Let $(s_\alpha: \alpha < \kappa)$ enumerate all stems $s \in V_\kappa,$ and define a decreasing sequence $(f_\alpha: \alpha \leq \kappa)$ of
conditions in $\Col(\kappa^{++}, < j(\kappa))_M$, such that $f_0=g$ and for any $\alpha < \kappa,$
\[
\exists f \leq f_\alpha, s_\a^{\frown} (\kappa, f)^{\frown} (j(A^1), j(F^1)) \parallel \phi \implies s_\a^{\frown} (\kappa, f_{\alpha+1})^{\frown} (j(A^1), j(F^1)) \parallel \phi.
\]
Then $f=f_\kappa \in D$ and it extends $g.$

Let $f=[F^2]_{\mathcal U}$. We may assume that $[F^2]_{\mathcal U} \leq [F^1]_{\mathcal U}$, and hence
$A^\ast= \{ \delta < \kappa: F^2(\delta) \leq F^1(\delta)       \} \in \mathcal U.$

For any stem $s= (\delta_0, f_0, \dots, \delta_{n-1}, f_{n-1})$, we have $A^s \in \mathcal U,$ where
$A^s$ consists of those
$\delta_n \in A^1$ such that if there exists $s^{\frown} (\delta_n, f_n) ^{\frown} (A, F) \leq s^{\frown} (A^1, F^1)$ which decides $\phi$,
then
$$s^{\frown} (\delta_n, F^2(\delta_n)) ^{\frown} (A^2, F^2) \parallel \phi.$$
Let
$$A^2= \bigtriangleup_s A^s \cap A^\ast.$$

Then $q^2=(A^2, F^2)$ is easily seen to be as requested.
\end{proof}

\begin{claim}
\label{one point extension uniformization}
(One point extension uniformization)
There exists $q^3= (A^3, F^3) \leq^* q^2$ such that for any stem $s$, if
$s^{\frown} (\delta_n, f_n)^{\frown} (A, F) \leq (A^3, F^3)$ and
\[
s^{\frown} (\delta_n, f_n) ^{\frown} (A, F) \parallel \phi,
\]
then for all $\delta \in A^3,$
\[
s^{\frown} (\delta, F^3(\delta)) ^{\frown} (A^3, F^3) \parallel \phi.
\]
\end{claim}
\begin{proof}
Let $s$ be a stem. Set
\begin{itemize}
\item $A^s_0=\{\delta \in A^2:  s^{\frown} (\delta, F^2(\delta)) ^{\frown} (A^2, F^2) \Vdash ~\phi        \}$.
\item $A^s_1=\{\delta \in A^2:  s^{\frown} (\delta, F^2(\delta)) ^{\frown} (A^2, F^2) \Vdash ~\neg\phi        \}$.
\item $A^s_2=\{\delta \in A^2:  s^{\frown} (\delta, F^2(\delta)) ^{\frown} (A^2, F^2) \nparallel \phi        \}$.
\end{itemize}
Let $i_s < 3$ be such that $A^s =A^s_{i_s} \in \mathcal U.$ Let $A^3= \bigtriangleup_s A^s \in \mathcal U,$
and set $F^3 = F^2 \upharpoonright A^3.$

We show that $q^3=(A^3, F^3)$ is as required. Thus suppose that $s^{\frown} (\delta_n, f_n)^{\frown} (A, F) \leq (A^3, F^3)$ and
$s^{\frown} (\delta_n, f_n) ^{\frown} (A, F) \parallel \phi$. Let us suppose that it forces $\phi.$
Then $\delta_n \in A^s_0,$
and hence for all $\delta \in A_3$ such that $s^{\frown} (\delta, F^3(\delta)) ^{\frown} (A^3, F^3)$
is a condition, we have
$\delta \in A^s_0,$ and hence $s^{\frown} (\delta, F^2(\delta)) ^{\frown} (A^2, F^2) \Vdash ~\phi.$ It follows that
\[
s^{\frown} (\delta, F^3(\delta)) ^{\frown} (A^3, F^3) \Vdash ~\phi.
\]
\end{proof}
\begin{claim}
\label{minimal extension counterexample}
(Minimal extension counterexample)
There exists $q^4=(A^4, F^4) \leq^* q^3$ which decides $\phi.$
\end{claim}
\begin{proof}
Suppose not. Let $p \leq q^3 \leq $ decide $\phi,$ such that $n^p$ is minimal. By our assumption, $n^p>0$
and hence we can write $p$ as
\[
p=s^{\frown} (\delta_n, f_n) ^{\frown} (A, F).
\]
By Claim \ref{one point extension uniformization}
\[
\forall \delta \in A^3,~ s^{\frown} (\delta, F^3(\delta)) ^{\frown} (A^3, F^3) \Vdash \phi.
\]
Note that if $p^\ast \leq s^{\frown} (A^3, F^3) $ and $n^{p^\ast} > n^p-1$, then for some $\delta \in A^3, p^\ast \leq s^{\frown} (\delta, F^3(\delta)) ^{\frown} (A^3, F^3),$
and hence
\[
s^{\frown}(A^3, F^3) \Vdash \phi.
\]
This is in contradiction with the minimal choice of $n^p$.
\end{proof}
The lemma follows.
\end{proof}
Let $G$ be $\MPB$-generic over $V$. Let $C= (\delta_n: n< \omega)$ be the added Prikry sequence and for each $n<\omega$
set $G_n \subseteq \Col(\delta_n^{++}, <\delta_{n+1})$ be the generic filter added by $G$.

\begin{lemma}
\label{estimate of bounded sets}
Suppose $A \in V[G]$ and $A \subseteq \delta_n^{+}$. Then $A \in V[\prod_{i<n}G_i]$.
\end{lemma}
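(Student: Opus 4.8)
The plan is to combine the factorization Lemma~\ref{factorization}, the Prikry property, and the closure of the direct-extension order, the crucial twist being to perform the two factor forcings with $\mathbb{Q}$ first. First I would fix a $\MPB$-name $\dot A$ for $A$ and, using that $G$ is generic and the Prikry sequence is infinite, choose $p \in G$ with $n^p \geq n+1$ and $p \Vdash \dot A \subseteq \check\delta^+$, where $\delta := \delta^p_n = \delta_n$. Applying Lemma~\ref{factorization} at $m = n$ gives
\[
\MPB/p \simeq \mathbb{P}_{<n} \times \mathbb{Q}, \qquad \mathbb{P}_{<n} := \prod_{i<n}\Col((\delta^p_i)^{++}, <\delta^p_{i+1}), \quad \mathbb{Q} := \MPB/p^{\geq n},
\]
with $(\mathbb{Q}, \leq^*)$ being $\delta^{++}$-closed. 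Writing $G_{<n} \times G'$ for the induced factorization of the part of $G$ below $p$, we have $V[G_{<n}] = V[\prod_{i<n}G_i]$ and $V[G] = V[G'][G_{<n}]$, with $G'$ being $\mathbb{Q}$-generic over $V$ and $G_{<n}$ being $\mathbb{P}_{<n}$-generic over $V[G']$ (product forcing theorem); so it suffices to show $A \in V[G_{<n}]$.

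The first step would be to prove $\mathcal{P}(\delta^+)^{V[G']} = \mathcal{P}(\delta^+)^V$. Since $\mathbb{Q} = \MPB/p^{\geq n}$ is a cone of $\MPB$ below $p^{\geq n}$, it inherits the Prikry property from Lemma~\ref{prikry property}: a statement of $\mathbb{Q}$'s forcing language is a statement of $\MPB$'s forcing language interpreted below $p^{\geq n}$, and a direct extension of a condition $\leq p^{\geq n}$ is again $\leq p^{\geq n}$. Given a $\mathbb{Q}$-name $\sigma$ with $\Vdash \sigma \subseteq \check\delta^+$ and a condition $q_0 \in \mathbb{Q}$, I would build a $\leq^*$-decreasing sequence $\seq{q_\alpha : \alpha \leq \delta^+}$ below $q_0$, with $q_{\alpha+1} \leq^* q_\alpha$ deciding ``$\check\alpha \in \sigma$'' (Prikry property of $\mathbb{Q}$) and $\leq^*$-lower bounds taken at limit stages (legitimate, since every proper initial segment has length $\leq \delta^+ < \delta^{++}$ and $(\mathbb{Q}, \leq^*)$ is $\delta^{++}$-closed). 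The terminal condition $q_{\delta^+}$ then decides ``$\check\alpha \in \sigma$'' for all $\alpha < \delta^+$ at once, hence forces $\sigma$ to equal a set belonging to $V$; by density $\sigma[G'] \in V$. As every subset of $\delta^+$ in $V[G']$ has such a name, $\mathcal{P}(\delta^+)^{V[G']} = \mathcal{P}(\delta^+)^V$.

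The second step would be a nice-name count over $V[G']$ for the small poset $\mathbb{P}_{<n}$. Since $A \subseteq \delta^+$ lies in $V[G'][G_{<n}]$ and $G_{<n}$ is $\mathbb{P}_{<n}$-generic over $V[G']$, there is in $V[G']$ a nice $\mathbb{P}_{<n}$-name $\tau = \bigcup_{\alpha<\delta^+}\{\check\alpha\} \times A_\alpha$ for $A$, determined by the family $\seq{A_\alpha : \alpha < \delta^+}$ of subsets of $\mathbb{P}_{<n}$. The poset $\mathbb{P}_{<n}$ is a Levy-collapse product determined by ordinals lying in $V$, hence is the same in $V$ and in $V[G']$, and by $\GCH$ it has size $\leq \delta^+$; therefore $\seq{A_\alpha : \alpha < \delta^+}$ codes, via a bijection $\delta^+ \times \mathbb{P}_{<n} \to \delta^+$ fixed in $V$, as a subset of $\delta^+$, and by the first step that subset lies in $V$. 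Hence $\tau \in V$, and $A = \tau[G_{<n}] \in V[G_{<n}] = V[\prod_{i<n}G_i]$, as desired.

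The point I expect to require the most care is the first step, and in particular the decision to force with $\mathbb{Q}$ over the ground model rather than to argue inside $V[\prod_{i<n}G_i]$ directly. In the latter model the relevant ``tail'' forcing is the version of $\MPB/p^{\geq n}$ built from its own (larger) normal measure, for which $G'$ need not be generic, and re-proving the Prikry property there is awkward because diagonal intersections can escape $\mathcal{U}$. Forcing $\mathbb{Q}$ first sidesteps this, since then everything about $\mathbb{Q}$ is verified over $V$; the only routine bookkeeping is to choose $p$ with $n^p \geq n+1$, so that $\delta_n$ is frozen and Lemma~\ref{factorization} applies at $m = n$.
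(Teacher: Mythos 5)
Your proof is correct and rests on essentially the same machinery as the paper's: the factorization of Lemma~\ref{factorization} at $m=n$, the Prikry property of the tail $\MPB/p^{\geq n}$, and the $\delta_n^{++}$-closure of its direct extension order, exploited along a $\leq^*$-decreasing chain of length $\delta_n^+$. The paper packages this as a single diagonal decision of the reduced name $\lusim{B}$ (which records which collapse conditions force which ordinals into $\lusim{A}$), whereas you split it into ``the tail adds no new subsets of $\delta_n^+$ over $V$'' plus a nice-name count for $\prod_{i<n}\Col(\delta_i^{++},<\delta_{i+1})$ coded as a subset of $\delta_n^+$; this is a reorganization of the same argument rather than a different route.
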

\begin{proof}
Let $p \in G$ be such that $n^p > n$. Let also $\lusim{A}$ be a $\MPB$-name for $A$ such that $\Vdash_{\MPB}$``$\lusim{A} \subseteq \delta_n^{+}$''.
Let $\lusim{B}$ be a $\MPB/ p^{\geq n}$-name for a subset of $\prod_{i<n} \Col(\delta_i^{++}, < \delta_{i+1}) \times \delta_n^+$ such that
\begin{center}
$ \Vdash_{\MPB/p^{\geq n}}$``$\forall \alpha < \delta_n^+ [~(q, \alpha) \in \lusim{B}) \iff q \Vdash_{\prod_{i<n} \Col(\delta_i^{++}, < \delta_{i+1})}$``$\alpha \in \lusim{A}$''~]''.
\end{center}

Let $(x_\alpha: \alpha \leq \delta_n^+)$ be an enumeration of $\prod_{i<n} \Col(\delta_i^{++}, < \delta_{i+1}) \times \delta_n^+$. Define a $\leq^*_{\MPB}$-decreasing sequence $(p_\alpha: \alpha \leq \delta_n^+ )$
of conditions in $\MPB / p^{\geq n}$ such that for each $\alpha< \delta_n^+, p_{\alpha+1} \|$``$x_\alpha \in \lusim{B}$''. This is possible as
$(\MPB/ p^{\geq n}, \leq^*)$ is  $\delta_n^{++}$-closed and satisfies the Prikry property. Then $p_{\delta_n^+}$ decides each ``$x_\alpha \in \lusim{B}$'', and so,
assuming $p_{\delta_n^+} \in G,$ we have
\begin{center}
$a= \{ \alpha < \kappa: \exists q \in \prod_{i<n}G_i, p_{\delta_n^+} \Vdash$``$(q, \alpha) \in \lusim{B}$''$ \} \in V[\prod_{i<n}G_i]$.
\end{center}
The result follows.
\end{proof}
The following is now immediate.
\begin{lemma}
\begin{enumerate}
Suppose $\lambda < \kappa$ is a cardinal.
\item [(a)] $V$ and $V[G]$ have the same bounded subsets of $\delta_0$.

\item [(b)] $Card^{V[G]} \cap [\delta_0, \kappa) = \bigcup_{n<\omega} \{ \delta_n, \delta_n^+, \delta_n^{++}\}$

\item [(c)] $V[G] \models$`` $\kappa=\delta_0^{+\omega}$''.
\end{enumerate}
\end{lemma}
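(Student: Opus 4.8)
The plan is to deduce all three clauses from the earlier lemmas, chiefly Lemma~\ref{estimate of bounded sets} and Lemma~\ref{factorization}, together with the standard cardinal arithmetic of a finite product of L\'evy collapses; (a) and (c) then reduce to formal consequences of (b), and the hypothesis ``$\lambda<\kappa$ is a cardinal'' plays no visible role. I would first record two preliminary facts. (i) $C=(\delta_n)_{n<\omega}$ is cofinal in $\kappa$, so $\kappa=\sup_{n}\delta_n$: for each $\gamma<\kappa$ the conditions $p$ with $n^p\ge 1$ and $\delta^p_{n^p-1}>\gamma$ are dense, since from any condition $q$ one can append to its stem a point of $A^q$ above $\gamma$ (every member of $\mathcal U$ being unbounded in $\kappa$). (ii) Every $\delta_n$ is inaccessible in $V$ — this is part of the set-up of $\MPB$, the measure-one sets $A^p$ being taken inside the set of inaccessibles (which lies in $\mathcal U$). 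Clause (a) is then immediate: a bounded subset of $\delta_0$ is a subset of $\delta_0\subseteq\delta_0^{+}$, so Lemma~\ref{estimate of bounded sets} with $n=0$ — for which $\prod_{i<0}G_i$ is trivial and $V[\prod_{i<0}G_i]=V$ — already puts it in $V$.

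For (b) I would prove the two inclusions separately. For ``$\supseteq$'', fix $n$ and put $W_n=V[\prod_{i\le n}G_i]$; since $\delta_n^{++}<\delta_{n+1}\le\delta_{n+1}^{+}$, Lemma~\ref{estimate of bounded sets} applied with index $n+1$ shows every subset of $\delta_n^{++}$ in $V[G]$ lies already in $W_n$, so — coding surjections via the absolute G\"odel pairing — an ordinal $\mu\le\delta_n^{++}$ is a cardinal in $V[G]$ iff it is a cardinal in $W_n$. Now $W_n$ is the extension of $V$ by the finite product $\prod_{i\le n}\Col(\delta_i^{++},<\delta_{i+1})$, for which a routine computation in $V$ (using GCH and the inaccessibility of the $\delta_i$) gives: the product is $\delta_0^{++}$-closed and $\delta_{n+1}^{+}$-c.c., each factor $\Col(\delta_i^{++},<\delta_{i+1})$ turns $\delta_{i+1}$ into $(\delta_i^{++})^{+}$ and collapses every cardinal in the interval $(\delta_i^{++},\delta_{i+1})$, and the factors do not interfere; hence the cardinals of $W_n$ in $[\delta_0,\delta_n^{++}]$ are exactly $\{\delta_i,\delta_i^{+},\delta_i^{++}:i\le n\}$, so $\delta_n,\delta_n^{+},\delta_n^{++}$ are cardinals in $V[G]$, and letting $n$ vary yields $\bigcup_n\{\delta_n,\delta_n^{+},\delta_n^{++}\}\subseteq\mathrm{Card}^{V[G]}$. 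For ``$\subseteq$'', suppose $\mu\in[\delta_0,\kappa)$ is a $V$-cardinal not in that union; taking the least $j$ with $\mu<\delta_j$ we get $j\ge 1$ and $\delta_{j-1}^{++}<\mu<\delta_j$, and choosing $p\in G$ with $n^p>j$ and applying Lemma~\ref{factorization}, the forcing $\MPB/p$ has $\Col(\delta_{j-1}^{++},<\delta_j)$ as a factor, which collapses $\mu$ onto $\delta_{j-1}^{++}$; hence $\mu\notin\mathrm{Card}^{V[G]}$.

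For (c) I would combine (b) with $\kappa=\sup_n\delta_n$: by (b) the $V[G]$-cardinals in $[\delta_0,\kappa)$ form the strictly increasing sequence $\delta_0,\delta_0^{+},\delta_0^{++},\delta_1,\delta_1^{+},\delta_1^{++},\dots$ with no cardinals in between, so an easy induction on $k<\omega$ shows the $k$th of them is $\delta_m$, $\delta_m^{+}$, or $\delta_m^{++}$ according as $k$ is $3m$, $3m+1$, or $3m+2$; hence $(\delta_0^{+\omega})^{V[G]}=\sup_{k}(\delta_0^{+k})^{V[G]}=\sup_{m}\delta_m=\kappa$, and $\kappa$, being a limit of $V[G]$-cardinals, is itself a cardinal of $V[G]$, so $V[G]\models$ ``$\kappa=\delta_0^{+\omega}$''. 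The only step carrying real content is the L\'evy-collapse bookkeeping in (b): checking that the finitely many collapses making up $W_n$ compose cleanly and pinning down, for each $i$, the single step that absorbs the interval just below $\delta_{i+1}$ — that step being $\delta_{i+1}$-c.c.\ exactly because $\delta_{i+1}$ is inaccessible in $V$; everything else follows immediately from the quoted lemmas or a one-line density argument.
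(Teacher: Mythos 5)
The paper offers no argument for this lemma beyond the phrase ``The following is now immediate,'' and your derivation is exactly the intended one: clause (a) from Lemma \ref{estimate of bounded sets} with $n=0$, the inclusion $\subseteq$ in (b) from Lemma \ref{factorization} plus the density of conditions with long stems, the inclusion $\supseteq$ from Lemma \ref{estimate of bounded sets} at index $n+1$ together with the routine cardinal arithmetic of a finite product of L\'evy collapses, and (c) by counting the cardinals listed in (b) against $\sup_n\delta_n=\kappa$. In substance this matches the paper's (implicit) proof, and the details you supply are correct.

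The one step you should not wave at is your preliminary fact (ii). It is \emph{not} part of the paper's definition of $\MPB$: a condition only requires $A\in\mathcal U$, and the stem ordinals $\delta_0<\dots<\delta_{n-1}$ are arbitrary increasing ordinals; nothing in the definition forces them to be inaccessible, or even cardinals. What is true is that, $\mathcal U$ being normal, the set $I$ of $V$-inaccessibles below $\kappa$ lies in $\mathcal U$, and the conditions $p$ with $A^p\subseteq I$ are $\le^*$-dense (shrinking $A^p$ to $A^p\cap I$ does not change $[F^p]_{\mathcal U}$); hence by genericity all $\delta_n$ beyond the stem of such a $p\in G$ are inaccessible, but the finitely many stem values need not be. For those finitely many $n$ clause (b) can genuinely fail (e.g.\ if $\delta_0$ is not a $V$-cardinal, or $\delta_1$ is singular in $V$), and your $\supseteq$ direction really does need each $\delta_{i+1}$ inaccessible, since that is what gives the $\delta_{i+1}$-c.c.\ of $\Col(\delta_i^{++},<\delta_{i+1})$. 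So either the requirement ``$A^p$ consists of inaccessibles'' must be added to the definition, or one must work below a condition with empty stem and measure-one set contained in $I$ (in the spirit of the reduction made at the start of the Prikry-property proof). This imprecision is inherited from the paper's own statement rather than introduced by you, but your justification ``this is part of the set-up of $\MPB$'' is not accurate as written; note that (a) and the $\subseteq$ half of (b) do not use it, since every interval $(\delta_n^{++},\delta_{n+1})$ is collapsed regardless of the nature of the $\delta_n$.
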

Now forcing over $V[G]$ by $\Col(\aleph_0, < \delta_0),$
we get a model in which $\kappa$ becomes $\aleph_\omega.$ This completes the proof of Theorem \ref{main theorem}.

Mohammad Golshani,
School of Mathematics, Institute for Research in Fundamental Sciences (IPM), P.O. Box:
19395-5746, Tehran-Iran.

E-mail address: golshani.m@gmail.com

URL: http://math.ipm.ac.ir/golshani/

\end{document}